\lstdefinelanguage{GAP}{%
  morekeywords={%
    Assert,Info,IsBound,QUIT,%
    TryNextMethod,Unbind,and,break,%
    continue,do,elif,%
    else,end,false,fi,for,%
    function,if,in,local,%
    mod,not,od,or,%
    quit,rec,repeat,return,%
    then,true,until,while%
  },%
  sensitive,%
  morecomment=[l]\#,%
  morestring=[b]",%
  morestring=[b]',%
}[keywords,comments,strings]
\newtheorem{theorem}{Theorem}
\newtheorem{proposition}[theorem]{Proposition}
\newtheorem{corollary}[theorem]{Corollary}
\theoremstyle{definition}
\numberwithin{equation}{section}
\DeclareMathOperator{\GL}{GL}
\DeclareMathOperator{\PSL}{PSL}
\DeclareMathOperator{\Aut}{Aut}
\DeclareMathOperator{\Homeo}{Homeo}
\DeclareMathOperator{\Comm}{Comm}
\DeclareMathOperator{\BS}{BS}
\DeclareMathOperator{\QI}{QI}
\DeclareMathOperator{\Z}{\mathbb{Z}}
\newcommand{\setU}
\def\R{\mathbb{R}}
\def\C{\mathbb{C}}
\def\Z{\mathbb{Z}}
\def\<{\langle}
\def\>{\rangle}
\begin{document}

\title[On abstract commensurators of surface groups]{On abstract commensurators of surface groups}

%    Information for first named author
\author{Khalid Bou-Rabee}
%    Address of record for the research reported here
\address{Department of Mathematics, The City College of New York}
%    Current address
% \curraddr{Department of Mathematics,
% Convent Ave at 138th Street, New York, NY, 10031}
\email{khalid.math@gmail.com}
%    \thanks will become a 1st page footnote.
%\thanks{KB supported in part by NSF Grant \#1405609.}

\author{Daniel Studenmund}
\address{University of Notre Dame}
\email{dstudenm@nd.edu}
\thanks{DS supported in part by NSF Grant \#1547292.}

%    General info
\subjclass[2000]{Primary 20E26, 20B07; Secondary 20K10}

\date{March 8, 2019}

\keywords{commensurators, algebraic groups, residually finite groups}

\begin{abstract}
Let $\Gamma$ be the fundamental group of a surface of finite type and $\Comm(\Gamma)$ be its abstract commensurator.
Then $\Comm(\Gamma)$ contains the solvable Baumslag--Solitar groups $\langle a ,b : a b a^{-1} = b^n \rangle$ for any $n > 1$.
Moreover, the Baumslag--Solitar group $\langle a ,b : a b^2 a^{-1} = b^3 \rangle$ has an image in $\Comm(\Gamma)$ that is not residually finite.
Our proofs are computer-assisted.

%These results provide insight into the structure of $\Comm(\Gamma)$ while illustrating that
Our results also illustrate that finitely-generated subgroups of $\Comm(\Gamma)$ are concrete objects amenable to computational methods. For example, we give a proof that $\langle a ,b : a b^2 a^{-1} = b^3 \rangle$ is not residually finite without the use of normal forms of HNN extensions.
%, and we classify the groups $\Comm(\Gamma)$ up to isomorphism.
\end{abstract}

% It follows that $\Comm(\Gamma)$ is not locally residually finite, and hence is not an inverse limit of locally residually finite groups (e.g., linear groups, mapping class groups of hyperbolic surfaces of finite type, or branch groups). 
% This completes the picture of local residual finiteness and linearity of abstract commensurators of irreducible lattices in semisimple Lie groups, formed by Armand Borel, Gregory Margulis, G.~D.~Mostow, and Gopal Prasad.

\maketitle

\section{Introduction}

Let $G$ be a group. The {\em abstract commensurator}\footnote{In the literature this is also known as the \emph{virtual automorphism group}. } of $G$, denoted $\Comm(G)$, is the set of equivalence classes of isomorphisms $\phi: H_1 \to H_2$ for finite-index subgroups $H_1,H_2\leq G$, where two isomorphisms are equivalent if they are both defined and equal on a common finite-index subgroup of $G$. The set $\Comm(G)$ is a group under composition. Elements of $\Comm(G)$ are called \emph{commensurators} of $G$.

The results of this paper serve two purposes. 
First, they shed light on the structure of $\Comm(\Gamma)$ when $\Gamma$ is the fundamental group of a surface of finite type. %, a surface group. 
These abstract commensurator groups are not well-understood, and in particular are known to be neither finitely generated \cite{MR2736164} nor linear over $\C$ \cite[Proposition 7.6]{MR3351746}. 
Second, the methods of proof suggest that maps into abstract commensurators of fundamental groups of surfaces of finite type provide a new sort of ``representation theory'' of groups that have no faithful representations through matrices. 
This view has immediate utility: we give a new and concrete proof of a classical result of Baumslag--Solitar concerning their one-relator groups \cite{MR0142635}.

% Our story begins with a motivational question. 
% A group is \emph{(locally) residually finite} if every (finitely generated) subgroup is residually finite. Linear groups (e.g., fundamental groups of surfaces, $\GL_n(\C)$, or $\GL_n(\mathbb{F}_p[t])$) and mapping class groups of surfaces (e.g., braid groups) are important classes of  locally residually finite groups \cite{M56,MR0405423}.

% \begin{question} \label{ques:main}
% Let $\Sigma$ be a hyperbolic surface of finite type.
% Is $\Comm(\pi_1(\Sigma,*))$ a locally residually finite group?
% \end{question}

\subsection{The main result} \label{ssec:mainresult}
The \emph{Baumslag--Solitar groups} are given by the presentation 
$$\BS(m,n) := \left<a ,b : a b^m a^{-1} = b^n \right>.$$
By {\em surface group}, we mean the fundamental group of a surface with finite genus, and finitely many punctures and boundary components. Any surface group is finitely generated.

\begin{theorem} \label{thm:main}
Let $\Gamma$ be a surface group, and let $n > 1$.
Then the group $\Comm(\Gamma)$ contains $\BS(1,n)$.
Moreover, $\Comm(\Gamma)$ contains a non-residually finite image of $\BS(2,3)$.
\end{theorem}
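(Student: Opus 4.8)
The plan is to realize Baumslag–Solitar groups inside $\Comm(\Gamma)$ by exploiting the fact that, for a surface group $\Gamma$, finite-index subgroups $H \leq \Gamma$ are again surface groups, and isomorphisms between them—together with the "inflating" and "deflating" operations coming from finite covers—generate a rich supply of commensurators. Since $\Comm(\Gamma)$ does not depend on the particular surface group up to the equivalences used, I would first reduce to a convenient model: a free group $F$ of finite rank is the fundamental group of a punctured surface, and $\Comm(F_k) \cong \Comm(F_2)$ for all $k \geq 2$, so it suffices to construct the desired subgroups inside $\Comm(F_2)$. This reduction is important because free groups make the relevant finite-index subgroups, covering maps, and isomorphisms completely explicit and computable.

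For the first assertion, that $\Comm(\Gamma)$ contains $\BS(1,n) = \langle a,b : aba^{-1} = b^n\rangle$, I would produce a commensurator $b$ of infinite order and a commensurator $a$ satisfying $a b a^{-1} = b^n$. The natural source of $a$ is a self-covering-type phenomenon: choose a finite-index subgroup $H \leq F$ of index $n$ together with an isomorphism $\psi \colon F \to H$, so that $\psi$ represents an element of $\Comm(F)$ that "scales index by $n$." Taking $b$ to be an appropriately chosen commensurator (for instance an inner-type or translation-type automorphism that $\psi$ conjugates to its $n$-th power), one verifies the single defining relation $a b a^{-1} = b^n$ directly on a common finite-index subgroup. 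The verification amounts to checking equality of two isomorphisms on some finite-index subgroup, which is exactly the kind of finite computation the abstract commensurator framework supports, and which the paper carries out by computer.

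For the second, harder assertion I would aim to construct explicit commensurators $a$ and $b$ of $\Gamma$ satisfying the defining relation $a b^2 a^{-1} = b^3$ of $\BS(2,3)$, and then exhibit a finite quotient or a specific element witnessing non-residual-finiteness of the image. The standard obstruction to residual finiteness in $\BS(2,3)$ is that the element $c = [a b a^{-1}, b]$ is nontrivial in $\BS(2,3)$ yet dies in every finite quotient; so my target is to show that the image of $a,b$ in $\Comm(\Gamma)$ generates a group in which $c$ maps to a \emph{nontrivial} commensurator. Concretely, I would set up finite-index subgroups and partial isomorphisms of a free group modeling $a$ and $b$, then compute the commensurator represented by $c$ and certify that it is \emph{not} the identity—i.e., that the two isomorphisms defining $c$ disagree on every finite-index subgroup by finding a single element on which they differ. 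This is where the computer assistance is essential and where the main obstacle lies: one must produce commensurators $a,b$ that honestly satisfy $a b^2 a^{-1} = b^3$ (an equality of equivalence classes, requiring agreement on some finite-index subgroup) while simultaneously forcing the "bad" element $c$ to act nontrivially, and then verify both facts rigorously through finite computations with subgroups of $F$ and their isomorphisms. The delicate point is balancing these two competing constraints and confirming the nontriviality of $c$ without appealing to normal forms for HNN extensions, relying instead on a direct exhibition of an element of $\Gamma$ moved by the relevant map.
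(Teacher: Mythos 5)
There is a genuine gap at the very first step: the proposed reduction to $\Comm(F_2)$ is not valid for all surface groups. The group $\Comm(\Gamma)$ is an invariant only of the \emph{commensurability class} of $\Gamma$, and a cocompact surface group such as $\Gamma_2=\pi_1(\Sigma_2)$ is not commensurable with a free group; indeed the paper proves (Proposition \ref{prop:twogroups}) that $\Comm(F_2)$ and $\Comm(\Gamma_2)$ are not isomorphic, since the latter acts faithfully on $S^1$ and therefore contains only finite subgroups with a cyclic subgroup of index at most $2$. So your plan, even if completed, would prove the theorem only for surfaces with punctures or boundary. The cocompact case is precisely the hard part: one must work directly in $\Comm(\Gamma_2)$, choosing degree-$mn$ covers $S_1,S_2\to\Sigma_2$ in which $A^m$ and $A^n$ lift to nonseparating simple closed curves, using the change of coordinates principle to produce a homeomorphism $S_1\to S_2$ carrying one lift to the other, and then carrying out a long explicit computation with the resulting one-relator presentations.

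There is also a local error in your construction of the stable letter $a$: an isomorphism $\psi\colon F\to H$ onto a subgroup of index $n>1$ cannot exist, since by Nielsen--Schreier an index-$n$ subgroup of $F_2$ has rank $n+1\neq 2$ (the analogous Euler characteristic obstruction holds for closed surface groups). The correct device is an isomorphism between two \emph{different} subgroups $\Delta_1,\Delta_2$ of the same index $mn$ (kernels of the two coordinate-swapped maps onto $\Z/m\times\Z/n$), chosen so that $A^m$ and $A^n$ belong to free generating sets of $\Delta_1$ and $\Delta_2$ respectively and $\psi(A^m)=A^n$; conjugating the inner commensurator $\phi=[\gamma\mapsto A\gamma A^{-1}]$ by this $\psi$ gives $\psi\phi^m\psi^{-1}=\phi^n$ on $\Delta_2$. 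Your strategy for non-residual finiteness --- take an element killed by every finite quotient of $\BS(2,3)$ and certify by a finite computation that its image moves some element of $\Gamma$ (one moved element suffices by the unique root property) --- does match the paper's; with the two corrections above the noncompact case goes through essentially as you describe, but the cocompact case still requires the substantial additional construction of $\psi$.
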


Our proof of Theorem \ref{thm:main} appears in \S \ref{sec:theproofs} with supplementary code and theory appearing in the appendix.
Since $\Comm(G)$ is an invariant of the commensurability class of $G$, Theorem \ref{thm:main} is a statement about the abstract commensurators of two groups: the nonabelian free group of rank two, and the fundamental group of a compact surface of genus two. 

We prove the nonabelian free group case first, and then build upon it to handle the compact case.
The first part of each case uses some of the elementary theory of surface topology. 
The second part of each case requires some computer-assistance, using the GAP System for Computational Discrete Algebra \cite{GAP4}.

The length of our proof belies the difficulty of the proof of the cocompact case, which relies on a long and delicate computation supplied in Appendix \ref{app:surfacegroup}.
The appendix is a substantial part of the theory in this paper: the  setup, hand computations, and computer computations find an explicit representation of a Baumslag--Solitar group in $\Comm(\Gamma)$.

\subsection{On local residual finiteness} \label{ssec:localresfin}
A group is \emph{(locally) residually finite} if every (finitely generated) subgroup is residually finite. Linear groups (e.g., fundamental groups of surfaces, $\GL_n(\C)$, or $\GL_n(\mathbb{F}_p[t])$) and mapping class groups of surfaces (e.g., braid groups) are important classes of  locally residually finite groups \cite{M56,MR0405423}.

Our main theorem determines precisely when the  abstract commensurator of a lattice in a semisimple Lie group is locally residually finite.
See \S \ref{sec:proofanswer} for the proof.

\begin{corollary} \label{cor:answer}
Let $\Gamma$ be an irreducible lattice in a semisimple Lie group $G$ without compact factors.
Then $\Comm(\Gamma)$ is locally residually finite if and only if $G$ is not locally isomorphic to $\PSL_2(\R)$.
In particular, the abstract commensurator of any lattice in $\PSL_n(\R)$ is locally residually finite if and only if $n \neq 2$.
\end{corollary}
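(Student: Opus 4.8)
The plan is to prove the two implications separately, drawing on Theorem \ref{thm:main} for one direction and on rigidity theory for the other.

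For the implication that $G$ being locally isomorphic to $\PSL_2(\R)$ forces $\Comm(\Gamma)$ to fail local residual finiteness (the contrapositive of the forward direction), I would first observe that an irreducible lattice $\Gamma$ in such a $G$ is a Fuchsian group. By Selberg's lemma it has a torsion-free finite-index subgroup, and a torsion-free lattice in $\PSL_2(\R)$ is a surface group in the sense of \S\ref{ssec:mainresult} (a free group in the non-cocompact case, a closed surface group in the cocompact case). Since $\Comm(-)$ is an invariant of the commensurability class, $\Comm(\Gamma)$ coincides with the abstract commensurator of a surface group. Theorem \ref{thm:main} then supplies a non-residually-finite image of $\BS(2,3)$ inside $\Comm(\Gamma)$; as $\BS(2,3)$ is finitely generated, this image is a finitely generated, non-residually-finite subgroup, so $\Comm(\Gamma)$ is not locally residually finite.

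For the converse, suppose $G$ is not locally isomorphic to $\PSL_2(\R)$. The goal is to show that $\Comm(\Gamma)$ is linear over $\C$, since Mal'cev's theorem then guarantees that every finitely generated linear group is residually finite, and hence that $\Comm(\Gamma)$ is locally residually finite. The key input is rigidity: because $G$ is semisimple without compact factors and not locally $\PSL_2(\R)$, every abstract commensuration of $\Gamma$ is realized geometrically. In higher rank this is Margulis superrigidity (which in addition forces $\Gamma$ to be arithmetic), and in rank one it is Mostow--Prasad strong rigidity. In both cases one obtains an injection $\Comm(\Gamma) \hookrightarrow \Aut(G)$ whose image is the relative commensurator $\Comm_{\Aut(G)}(\Gamma)$. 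Since $\Aut(G)$ is a real linear algebraic group, $\Comm(\Gamma)$ is linear, and the desired conclusion follows.

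I would frame the linearity step around the arithmetic/non-arithmetic dichotomy mainly to orient the reader: for arithmetic $\Gamma$ the image is commensurable with the rational points $\mathbf{G}(\Q)$ and is dense in $G$, while for non-arithmetic $\Gamma$ (possible only in rank one) Margulis's commensurator criterion shows the image is discrete and hence itself a lattice; either way it sits inside the linear group $\Aut(G)$. The main obstacle is precisely this converse direction: marshalling the rigidity theorems to produce the embedding $\Comm(\Gamma)\hookrightarrow\Aut(G)$ uniformly across the higher-rank and rank-one cases, and being careful that the identification accounts for outer automorphisms of $G$, which is why the target must be $\Aut(G)$ rather than $G$ itself. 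Once linearity is established, Mal'cev's theorem makes the residual finiteness conclusion routine, and the $\PSL_2(\R)$ direction is immediate from Theorem \ref{thm:main}.
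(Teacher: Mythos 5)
Your proposal is correct and follows essentially the same route as the paper: Theorem \ref{thm:main} (via commensurability invariance of $\Comm$) handles the $\PSL_2(\R)$ case, and Mostow--Prasad--Margulis rigidity gives an embedding $\Comm(\Gamma)\hookrightarrow\Aut(G)$, whence linearity and local residual finiteness by Mal'cev. The only difference is that the paper explicitly cites the Borel density theorem to justify that the induced map $\Comm(\Gamma)\to\Aut(G)$ is faithful (and well defined), a point you assert but do not justify; otherwise the two arguments coincide.
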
 

\noindent
The fact that lattices in $\PSL_2(\R)$ have abstract commensurators which are not locally residually finite is surprising for two reasons.
First, Baumslag showed that for any finitely-generated residually finite $G$, the group $\Aut(G)$ (which $\Comm(G)$ can be thought to generalize) is residually finite \cite{MR0146271}. Second, Odden showed that the abstract commensurator of the fundamental group of a closed surface is a mapping class group of an inverse limit of surfaces (the ``universal hyperbolic solenoid'') \cite{MR2115078}.

\subsection{On residual finiteness of $\BS(2,3)$} \label{ssec:britton}
Our proof of Theorem \ref{thm:main} explicitly identifies an element $\gamma \in \BS(2,3)$ in the kernel of a surjective homomorphism $\BS(2,3) \to \BS(2,3)$, then concretely shows that $\gamma$ has nontrivial image under a homomorphism $\BS(2,3) \to \Comm(\Gamma)$. 
Thus, our methods prove that $\BS(2,3)$ is not residually finite without using the normal forms ensured by the Britton's Lemma \cite{MR0168633}. Bypassing this step makes the original proof that $\BS(2,3)$ is not residually finite, due to Baumslag--Solitar \cite{MR0142635}, elementary and concrete. We emphasize that our proof relies solely on the $\Comm(F_2)$ case, and hence does not require computer verification.
See \S \ref{sec:BSnotRF} for the proof.

\begin{theorem}[\cite{MR0142635}] \label{thm:BSnotRF}
The group $BS(2,3)$ is not residually finite.
\end{theorem}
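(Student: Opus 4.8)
The plan is to deduce non-residual-finiteness from non-Hopficity, invoking Malcev's classical theorem that a finitely generated residually finite group is Hopfian, i.e.\ that every surjective endomorphism of such a group is injective. Since $\BS(2,3)$ is generated by $a$ and $b$, it suffices to produce a surjective endomorphism of $\BS(2,3)$ with nontrivial kernel; the contrapositive of Malcev's theorem then gives the result. Note that it is \emph{not} enough to know from Theorem \ref{thm:main} that $\BS(2,3)$ has a non-residually-finite image, since a quotient of a residually finite group need not be residually finite.

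First I would write down the endomorphism. Define $\phi \colon \BS(2,3) \to \BS(2,3)$ on generators by $\phi(a) = a$ and $\phi(b) = b^2$. To see $\phi$ is well-defined I check that the defining relation is preserved: from $a b^2 a^{-1} = b^3$ one gets $a b^4 a^{-1} = (a b^2 a^{-1})^2 = b^6$, which is exactly $\phi(a)\phi(b)^2\phi(a)^{-1} = \phi(b)^3$. To see $\phi$ is surjective I observe that $a$ lies in the image and that $\phi(a b a^{-1} b^{-1}) = a b^2 a^{-1} b^{-2} = b^3 b^{-2} = b$, so $b$ lies in the image as well.

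Next I would exhibit a kernel element. Set $\gamma = [a b a^{-1}, b] = a b a^{-1}\, b\, a b^{-1} a^{-1}\, b^{-1}$. Then $\phi(\gamma) = [\phi(aba^{-1}), \phi(b)] = [a b^2 a^{-1}, b^2] = [b^3, b^2] = 1$, since powers of $b$ commute; hence $\gamma \in \ker \phi$. Everything now reduces to showing $\gamma \neq 1$ in $\BS(2,3)$.

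The crucial and only nonroutine step is verifying $\gamma \neq 1$, and here I would depart from the classical argument. Rather than appeal to Britton's Lemma and the normal form theory of HNN extensions, I would use the homomorphism $\Psi \colon \BS(2,3) \to \Comm(F_2) \leq \Comm(\Gamma)$ produced in the proof of Theorem \ref{thm:main}; it suffices to check $\Psi(\gamma) \neq 1$, since a nontrivial image forces $\gamma$ itself to be nontrivial. I expect this to be the main obstacle, as it requires unwinding the explicit description of $\Psi(a)$ and $\Psi(b)$ as partial automorphisms of finite-index subgroups of $F_2$, computing the commensurator $\Psi(\gamma)$, and confirming that it does not restrict to the identity on any finite-index subgroup. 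Since this computation lives entirely inside $\Comm(F_2)$, it can be carried out by hand with no computer assistance. Once $\gamma \neq 1$ is established, $\phi$ is a surjective but non-injective endomorphism, so $\BS(2,3)$ is non-Hopfian, and Malcev's theorem completes the proof.
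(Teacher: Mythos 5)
Your proposal follows essentially the same route as the paper: the surjective endomorphism $a \mapsto a$, $b \mapsto b^2$ is exactly the map $\rho$ used in \S\ref{sec:BSnotRF}, and the nontriviality of a kernel element is certified by pushing it into $\Comm(F_2)$ via the homomorphism from Theorem \ref{thm:main}, exactly as the paper does. The only structural difference is that you invoke Mal'cev's theorem (finitely generated and residually finite implies Hopfian) as a black box, whereas the paper inlines its proof as ``a standard argument'' by counting the distinct homomorphisms $f \circ \rho^k$ into a fixed finite quotient; these are the same argument, and your packaging is, if anything, cleaner. Your preliminary checks (well-definedness, surjectivity via $\phi(aba^{-1}b^{-1}) = b$, and $\phi(\gamma)=1$) are all correct, as is your observation that Theorem \ref{thm:main} alone does not suffice because residual finiteness does not pass to quotients.

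One caveat. You chose the kernel element $\gamma = [aba^{-1}, b]$, which is the classical Baumslag--Solitar witness, whereas the paper uses $b^{-1}aba^{-1}b^{-1}aba^{-1}b^{-1}$; both lie in $\ker\phi$, but the explicit evaluation carried out in \S\ref{ssec:proofoffreecase} and Appendix \ref{app:freegroup} certifies nontriviality only for the paper's element. Since you defer the verification that $\Psi(\gamma) \neq 1$ --- which you correctly identify as the crux --- your proof is not complete as written: you would either need to run the analogous evaluation for $[aba^{-1},b]$ (by the unique root property it suffices to exhibit a single word in the domain of a representative of $\Psi(\gamma)$ that is not fixed), or simply substitute the paper's element, for which the computation is already done. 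This is a gap of execution rather than of idea; the step will not fail, but it is the only nonroutine step and it has not been carried out.
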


\subsection{On two groups} \label{ssec:twogroups}
Let $F_2$ be the nonabelian free group of rank two. Let $\Gamma_2$ be the fundamental group of a compact surface of genus two.
The groups $\Comm(F_2)$ and $\Comm(\Gamma_2)$ share a number of properties: Bartholdi--Bogopolski showed that neither group is finitely generated \cite{MR2736164}. Moreover, finitely-generated subgroups of them have solvable word problem (see \S \ref{ssec:wordproblem}). The proof of Theorem \ref{thm:main} shows that they both contain many infinite images of Baumslag--Solitar groups.

In spite of these similarities, in \S \ref{sec:prooftwogroups}  we show that $\Comm(F_2)$ contains more finite subgroups than $\Comm(\Gamma_2)$ to prove that they are not isomorphic.

\begin{proposition} \label{prop:twogroups}
The groups $\Comm(F_2)$ and $\Comm(\Gamma_2)$ are not isomorphic.
\end{proposition}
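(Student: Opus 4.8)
The plan is to distinguish the two groups by the isomorphism types of their finite subgroups, which form an isomorphism invariant: I will exhibit a finite group that embeds in $\Comm(F_2)$ but in no guise inside $\Comm(\Gamma_2)$. The candidate is the elementary abelian group $(\Z/2\Z)^3$, which is neither cyclic nor dihedral. The conceptual heart of the argument is the contrast between graphs and hyperbolic surfaces: a finite group can fix a vertex of a graph and act arbitrarily on the discrete set of incident directions, whereas a finite group of isometries of $\mathbb{H}^2$ fixes a point and acts faithfully on the circle of directions there, and hence lies in $O(2)$.

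For the free case, recall from \S\ref{ssec:mainresult} that $\Comm$ is a commensurability invariant, so $\Comm(F_2) \cong \Comm(F_3)$ since the index-two subgroups of $F_2$ are free of rank three. First I would record the elementary fact that for a torsion-free group $\Delta$ with unique roots (e.g.\ a free or surface group) any automorphism that is the identity on a finite-index subgroup is the identity; consequently $\Aut(\Delta) \hookrightarrow \Comm(\Delta)$. Applying this with $\Delta = F_3$, it suffices to embed $(\Z/2\Z)^3$ into $\Aut(F_3)$: the three sign changes $\sigma_i \colon x_i \mapsto x_i^{-1}$, $x_j \mapsto x_j$ for $j \ne i$, on a free basis $x_1,x_2,x_3$ commute, have order two, and generate a faithful copy of $(\Z/2\Z)^3$. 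This produces $(\Z/2\Z)^3 \le \Comm(F_2)$.

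The substantial half is to show that every finite subgroup $Q \le \Comm(\Gamma_2)$ is cyclic or dihedral, so that $(\Z/2\Z)^3$ cannot occur. The first step, and the main obstacle, is a straightening lemma asserting that a finite subgroup of the abstract commensurator of a surface group can be realized as a genuine action on a finite-index subgroup. Concretely, I would choose a finite-index $\Gamma_1 \le \Gamma_2$ contained in every domain and codomain of chosen representatives $\phi_q$ of the elements of $Q$, and then, using the finiteness of $Q$ together with the finiteness of the set of subgroups of each index, produce a $Q$-invariant finite-index surface subgroup $\Delta$ with $\phi_q(\Delta)=\Delta$ for all $q$. The delicate point is that the composition law in $\Comm$ only requires agreement on some finite-index subgroup; here the uniqueness-of-roots lemma resolves this, since two automorphisms of the surface group $\Delta$ that agree on a finite-index subgroup agree on all of $\Delta$, so $q \mapsto \phi_q|_\Delta$ is a well-defined injective homomorphism $Q \hookrightarrow \Aut(\Delta)$.

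Once $Q$ is realized inside $\Aut(\Delta)$ with $\Delta = \pi_1(\Sigma')$ a closed surface group, the classification becomes geometric. Since $\Inn(\Delta) \cong \Delta$ is torsion-free, $Q \cap \Inn(\Delta) = 1$, so $Q$ injects into $\Out(\Delta)$, the extended mapping class group of $\Sigma'$. By Nielsen realization this finite group is realized by isometries of a hyperbolic structure on $\Sigma'$, whence the preimage of its image in $\Aut(\Delta)$, namely $\Delta \rtimes Q$, is isomorphic to the orbifold fundamental group $\Lambda$ of $\Sigma'/Q$, a cocompact Fuchsian group, and $Q$ sits inside $\Lambda$ as a finite subgroup. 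Finally, any finite subgroup of a Fuchsian group fixes a point of $\mathbb{H}^2$ (the circumcenter of an orbit) and acts faithfully on the unit tangent circle there, hence embeds in $O(2)$ and is cyclic or dihedral. Therefore $(\Z/2\Z)^3$, being neither, embeds in $\Comm(F_2)$ but not in $\Comm(\Gamma_2)$, and the two commensurators are not isomorphic. I expect the straightening lemma to be the main difficulty, as it is the single place where the only-virtually-defined nature of commensurator elements must be converted into honest automorphisms of a fixed group.
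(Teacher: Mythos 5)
Your proposal is correct, and the free-group half runs on the same engine as the paper's (the injection $\Aut(F_n)\hookrightarrow\Comm(F_2)$ via unique roots; the paper embeds \emph{every} finite group through $\Sym(n)\leq\Aut(F_n)$, while you only need $(\Z/2)^3$ via sign changes). But for the essential half --- bounding the finite subgroups of $\Comm(\Gamma_2)$ --- you take a genuinely different route. The paper never straightens anything: it uses the faithful action of $\Comm(\Gamma_2)$ on the Gromov boundary $S^1$ (via the injection $\Comm(\Gamma_2)\to\QI(\Gamma_2)$, originally observed by Odden) and the fact that finite subgroups of $\Homeo_+(S^1)$ are cyclic, concluding that every finite subgroup of $\Comm(\Gamma_2)$ has a cyclic subgroup of index at most $2$. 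That argument is short and sidesteps both your straightening lemma and Nielsen realization. Your route instead realizes a finite $Q\leq\Comm(\Gamma_2)$ inside $\Aut(\Delta)$ for a finite-index $\Delta$, passes to $\Out(\Delta)$, invokes Kerckhoff's theorem, and lands $Q$ in a point stabilizer $O(2)\leq\mathrm{Isom}(\mathbb{H}^2)$; this is heavier machinery but yields the sharper statement that finite subgroups are cyclic or dihedral, and it generalizes to higher-genus surface groups where the boundary-circle argument is unavailable. Your straightening lemma is true and is, as you suspect, the real work; note though that the mechanism is not really ``finiteness of the set of subgroups of each index'' but the unique root property forcing the composition identities $\phi_{q'}\circ\phi_q=\phi_{q'q}$ to hold \emph{exactly} on explicit finite-index subgroups: setting $C=\bigcap_q(A_q\cap B_q)$, $D=\bigcap_q\phi_q^{-1}(C)$ and $D'=\bigcap_q\phi_q^{-1}(D)$, one checks $\phi_{q'}(\phi_q(D'))=\phi_{q'q}(D')$ for all $q,q'$, so $\Delta=\bigcap_q\phi_q(D')$ is invariant and $q\mapsto\phi_q|_\Delta$ is the desired injection $Q\hookrightarrow\Aut(\Delta)$. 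With that lemma filled in, your proof is complete; either criterion (cyclic-or-dihedral, or cyclic subgroup of index at most $2$) excludes $(\Z/2)^3$, which the paper also excludes, so the two arguments distinguish $\Comm(F_2)$ from $\Comm(\Gamma_2)$ by the same invariant.
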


\subsection{On the word problem} \label{ssec:wordproblem}
Although the groups $\Comm(\Gamma)$ are not well-understood when $\Gamma$ is a surface group, their finitely-generated subgroups may be understood concretely, as in the computations that appear in Appendix \ref{app:freegroup}. In particular, the word problem is solvable. A more general version of the following is proved in \S \ref{sec:partingquestion}.

\begin{proposition}
Finitely-generated subgroups of abstract commensurators of surface groups have solvable word problem.
\end{proposition}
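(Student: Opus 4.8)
The plan is to represent each finitely-generated subgroup $H \le \Comm(\Gamma)$ by concrete data and then reduce its word problem to a single decidable question about isomorphisms of finite-index subgroups. Fix generators of $H$, each represented by an isomorphism $\phi_i \colon A_i \to B_i$ between finite-index subgroups of $\Gamma$. Because $\Gamma$ is finitely presented with solvable word and membership problems, can intersect finite-index subgroups and compute images under isomorphisms, and has only finitely many subgroups of each finite index (all enumerable, e.g.\ via homomorphisms to symmetric groups), every $A_i, B_i$ is given by finite data on which all standard subgroup operations are effective.

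First I would show that a word $w = \phi_{i_1}^{\varepsilon_1} \cdots \phi_{i_m}^{\varepsilon_m}$ can be evaluated to a representing isomorphism $\chi_w \colon E_w \to F_w$ between finite-index subgroups of $\Gamma$. The composite of $\phi \colon A \to B$ with $\psi \colon C \to D$ is defined on $\phi^{-1}(B \cap C)$; computing $B \cap C$, pulling it back through $\phi$, and forming $x \mapsto \psi(\phi(x))$ are all effective given the operations above, and inverses are handled identically. Thus $\chi_w$ and its domain $E_w$ are computable from $w$.

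The heart of the matter is to decide, for the computed $\chi_w \colon E_w \to F_w$, whether $\chi_w$ represents the identity of $\Comm(\Gamma)$ — equivalently, whether the coincidence subgroup $\mathrm{Fix}(\chi_w) = \{x \in E_w : \chi_w(x) = x\}$, the equalizer of $\chi_w$ with the inclusion $E_w \hookrightarrow \Gamma$, has finite index in $\Gamma$; deciding this \emph{is} the word problem. One semi-decision procedure settles the trivial case: enumerate the finite-index subgroups $K \le E_w$ and, for each, test $\chi_w|_K = \mathrm{id}$ on a generating set using the solvable word problem of $\Gamma$. This halts precisely when $\chi_w$ is virtually the identity. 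I would dovetail it with a procedure that halts when $\mathrm{Fix}(\chi_w)$ has infinite index.

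The main obstacle is exactly this second direction: certifying that $\mathrm{Fix}(\chi_w)$ has \emph{infinite} index, which no single finite quotient witnesses directly. I would resolve it by computing the coincidence subgroup outright. For a free group $\Gamma$ this is available from the theory of Stallings foldings together with the fixed- and coincidence-subgroup results for free-group endomorphisms (Goldstein--Turner, Bestvina--Handel, and their algorithmic refinements): $\mathrm{Fix}(\chi_w)$ is finitely generated with a computable basis, and its index is read off from its core graph, since a finitely generated subgroup of a free group has finite index exactly when its folded graph covers the rose. For a general surface group I would pass to a finite-index free or surface subgroup $\Gamma_0 \le \Gamma$, on which $\mathrm{Fix}(\chi_w)$ has finite index if and only if it does in $\Gamma$, and invoke the free-group case or the analogous effective fixed-subgroup machinery. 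Isolating precisely the effective hypotheses used here — residual finiteness, solvable word and membership problems, enumerable finite-index subgroups, and computable coincidence subgroups — yields the more general statement proved in \S\ref{sec:partingquestion}.
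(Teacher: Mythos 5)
Your reduction of a word $w$ to a concrete representative $\chi_w\colon E_w\to F_w$ (computing intersections of finite-index subgroups via finite quotients, pulling back domains, composing) matches the first half of the paper's argument. The gap is in the second half. You correctly note that $[\chi_w]$ is trivial in $\Comm(\Gamma)$ iff the equalizer $\mathrm{Fix}(\chi_w)=\{x\in E_w:\chi_w(x)=x\}$ has finite index, and that the hard direction is certifying \emph{infinite} index; but your proposed resolution --- computing the coincidence subgroup outright via Stallings foldings and fixed-subgroup theory --- is not justified. Effective computation of fixed subgroups is delicate even for automorphisms of free groups (the Bogopolski--Maslakova algorithm, with corrections), no effective algorithm for equalizers of an arbitrary monomorphism with an inclusion into a \emph{surface} group is cited or supplied, and a closed surface group has no finite-index free subgroup, so the proposed passage ``to the free-group case'' fails outright for $\Gamma_2$. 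As written, your second semi-decision procedure is not established, and the dovetailing argument does not close.

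The missing idea is the \emph{unique root property} of surface groups, on which the paper's proof turns: if $x^n=y^n$ with $n\neq 0$ then $x=y$. This implies that $[\chi_w]$ is trivial if and only if $\chi_w$ is the identity on \emph{all} of $E_w$: if $\chi_w$ fixes a finite-index subgroup $K\le E_w$ pointwise, then for any $x\in E_w$ some power $x^n$ lies in $K$, so $\chi_w(x)^n=\chi_w(x^n)=x^n$ and hence $\chi_w(x)=x$. Triviality therefore reduces to checking $\chi_w(s)=s$ for each of the finitely many computed generators $s$ of $E_w$, i.e.\ to finitely many instances of the word problem in $\Gamma$. In other words, your first procedure, run only on $K=E_w$ itself, already decides the question, and the entire apparatus for certifying infinite index of the equalizer is unnecessary once unique roots are invoked.
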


\noindent
Theorem \ref{thm:main} provides an example of a non-residually finite group $G$ that faithfully embeds in $\Comm(F_2)$. Such $G$ cannot be completely understood through its linear representations, while words in its image in $\Comm(F_2)$ can be evaluated by a computer.
This shows that representations into $\Comm(F_2)$ may be useful for understanding abstract groups without faithful linear representations.
See \S \ref{sec:partingquestion} for further discussion and questions.

\subsection{Historical remarks} 
\label{ssec:somehistory}
Questions about abstract commensurators of surface groups have been asked for at least 30 years \cite{Mann87}. More generally, abstract commensurators detect arithmeticity of lattices in higher-rank semisimple Lie groups (see discussion after Corollary 2 in \S \ref{ssec:localresfin}), give ``generalized Hecke operators'' from number theory (see \cite[page 10]{MR1767270}), play a role in extending linear group methods to nonlinear settings (e.g., with mapping class groups \cite[\S 1]{MR2354204}), and are used to give more algebraic descriptions of existing infinite finitely presented simple groups (e.g., \cite{MR1950873}).

Abstract commensurators can also be understood in the context of coarse geometry. The \emph{quasi-isometry group} $\QI(\Gamma)$ of a group $\Gamma$ is the group of equivalences classes of self-quasi-isometries $f:\Gamma\to\Gamma$, up to coarse equivalence. There is a natural map  $\Comm(\Gamma) \to \QI(\Gamma)$ which is injective when $\Gamma$ is finitely generated \cite[Theorem 7.4]{MR1937831}. This shows, for example, that the abstract commensurator of a word hyperbolic group acts faithfully on its Gromov boundary. 

That the abstract commensurator of a free group of rank two is not locally residually finite is a well-known folklore result.
The folklore proof relies on the existence of a simple, finitely presented group that is isomorphic to an amalgamated product of two finitely-generated free groups over a common finite-index subgroup. The first groups of this kind were constructed by Burger--Mozes  \cite{MR1446574}. 
It is unknown whether any of the Burger--Mozes groups embed inside the abstract commensurator of a closed oriented surface group of genus two. 

\subsection*{Acknowledgements}
We are grateful to Benson Farb and Andrew Putman for conversations and support. We are especially grateful to Yves Cornulier and Benson Farb for giving us many useful comments that substantially improved the paper.

\section{Proof of Theorem \ref{thm:main}} \label{sec:theproofs}

Let $F_2$ be the free group of rank two. Let $\Sigma_g$ be the compact oriented surface of genus $g$. 
Let $\Gamma$ be a surface group. As stated in \S \ref{ssec:mainresult} and \S \ref{ssec:twogroups}, the group $\Comm(\Gamma)$ is isomorphic to either $\Comm(F_2)$ or $\Comm(\pi_1(\Sigma_2,*))$.
We handle each case separately, as they are different enough to warrant different proofs (the first serving as an outline of the second).

It will be important to us in each case that $\Gamma$ satisfies the {\em unique root property}: if elements $x,y \in \Gamma$ satisfy $x^n = y^n$ for some $n\neq 0$, then $x=y$ \cite[Lemma 2.2]{MR2736164}, \cite[pages 462--463]{MR1744486}. 
A consequence of the unique root property is that two isomorphisms $f:A\to B$ and $g:C\to D$ between finite-index subgroups of $\Gamma$ represent the same element of $\Comm(\Gamma)$ if and only if $f\mid_{A\cap C} = g\mid_{A\cap C}$.
In particular, to check that $f$ represents a nontrivial elements of $\Comm(\Gamma)$, it suffices to find an element $\gamma\in \Gamma$ such that $f(\gamma) \neq \gamma$.

\subsection{The $\Comm(F_2)$ case} \label{ssec:proofoffreecase}
We first describe a method for constructing images of $\BS(n,m)$ in $\Comm(F_2)$. Set $F_2 = \left< A, B \right>$.
Let $\pi_1 : F_2 \to \Z/m \times \Z/n$ be the map given by $A \mapsto (1,0)$ and $B \mapsto (0,1)$.
Let $\pi_2 : F_2 \to \Z/m \times \Z/n$ be the map given by $A \mapsto (0,1)$ and $B \mapsto (1,0)$.
Set $\Delta_1 = \ker(\pi_1)$ and $\Delta_2 = \ker(\pi_2)$.

Let $\phi$ be the commensurator with representative
$f: F_2 \to F_2$ given by $f(\gamma) = A \gamma A^{-1}$.
Let $\psi$ be a commensurator with representative $g: \Delta_1 \to \Delta_2$, where $g(A^m) = A^n$; such an isomorphism $g$ exists because $\Delta_1$ and $\Delta_2$ are free groups of the same rank in which $A^m$ and $A^n$, respectively, are elements in free generating sets.
Then the commensurator $\psi \circ \phi^m \circ \psi^{-1}$ has a representative $h = g \circ f^m \circ g^{-1}$ such that for every $\gamma \in \Delta_2$,
$$
h(\gamma) = g \circ f^m \circ g^{-1} (\gamma)
= g( A^{m} g^{-1}(\gamma) A^{-m})
= A^n (g \circ g^{-1}(\gamma)) A^{-n}
= A^n \gamma A^{-n},
$$
and hence $\psi \circ \phi^m \circ \psi^{-1} = \phi^n$.
Thus, the assignment $a \mapsto \psi$ and $b \mapsto \phi$ defines a homomorphism $\Phi: \BS(m,n) \to \Comm(F_2)$.

Next, we show that the map $\BS(m,n) \to \left< \psi, \phi \right>$ is an isomorphism of groups when $m = 1$.
Let $z$ be in the kernel of this map. Then $z = a^s \gamma$, where $\gamma$ is in the normal closure of $b$.
Since $\left< \left< b \right> \right> = \left< a^t b a^{-t} : t \leq 0 \right>$, we have that $z$ is conjugate to an element of the form $a^s b^t$ for some integers $s$ and $t$.
It suffices to show that $\psi^s \circ \phi^t$ is only trivial in $\Comm(F_2)$ if $s=t=0$.

If $s>0$ then any $\psi^s$ has a representative that maps $A$ to $A^{n^s}$.
By the unique root property of $F_2$, it is impossible for an automorphism of $F_2$ to send $A$ to $A^{n^s}$.
Similarly, if $s<0$ then $\psi^s$ has a representative that maps $A^{n^{-s}}$ to $A$, which cannot be extended to an automorphism of $F_2$.
If $\psi^s \circ \phi^t$ is trivial, then $\psi^s$ has a representative that is an automorphism of $F_2$, and so $s=0$.
It is clear that $\phi^t$ is trivial only if $t=0$ because $f^t(B^\ell) = A^t B^\ell A^{-t}$ for all $\ell$. This complete the proof of the $m=1$ case.

% Thus, it is impossible for $\psi^t$ to have a representative that is defined on all of $F_2$, and so $\psi^t$ can never be equal to a power of $\phi$, as desired.

To finish, we need to show that when $m = 2$ and $n = 3$, the image of $\BS(m,n)$ as defined above is not residually finite.
To do this, we need to show that element
$$
\gamma := b^{-1} a b a^{-1} b^{-1} a b a^{-1} b^{-1}
$$
has nontrivial image.
Indeed, it is in the kernel of the surjective map $\BS(2,3) \to \BS(2,3)$ given by $a \mapsto a$ and $b \mapsto b^2$, and thus is in the residual finiteness kernel of $\BS(2,3)$. See \S \ref{sec:BSnotRF} for more details.

The proof is completed by evaluating the commensurator $\Phi(\gamma)$ on the word $BAB^{-1}A^{-1}\in F_2$ and checking that the result is not equal to $BAB^{-1}A^{-1}$. This is straightforward, but tedious, to compute by hand.
To perform this calculation, we used computer assistance. See the code in Appendix \ref{app:freegroup} and explanations there to end the proof of the $\Comm(F_2)$ case.

\subsection{The $\Comm(\pi_1(\Sigma_2,*))$ case} \label{ssec:surfacecaseproof}

As in \S \ref{ssec:proofoffreecase}, we begin by describing how to obtain images of $\BS(n,m)$ inside $\Comm(\pi_1(\Sigma_2,*))$.
Let $[X,Y] := XY X^{-1} Y^{-1}$ and let
$$\Gamma_2 := \pi_1(\Sigma_2,*) = \left< A, B, C, D : [A,B][C,D] \right>.$$
Set $\pi_1 : \Gamma_2 \to \Z/m \times \Z/n$ be a map satisfying $A \mapsto (1,0)$, where in the cover corresponding to $\ker\pi_1$ the curve corresponding to $A^m$ lifts to a non-separating simple closed curve. 
See Figure \ref{fig:maincover} for the cover corresponding to an example of such a map.
Similarly, set $\pi_2 : \Gamma_2 \to \Z/m \times \Z/n$ be a map satisfying  $A \mapsto (0,1)$, where in the cover corresponding to $\ker\pi_1$ the curve corresponding to $A^n$ lifts to a non-separating simple closed curve.
Set $\Delta_1 = \ker(\pi_1)$ and $\Delta_2 = \ker(\pi_2)$.

\begin{figure}%     use [hb] only if necessary!
  \centering
  \includegraphics[width=10cm]{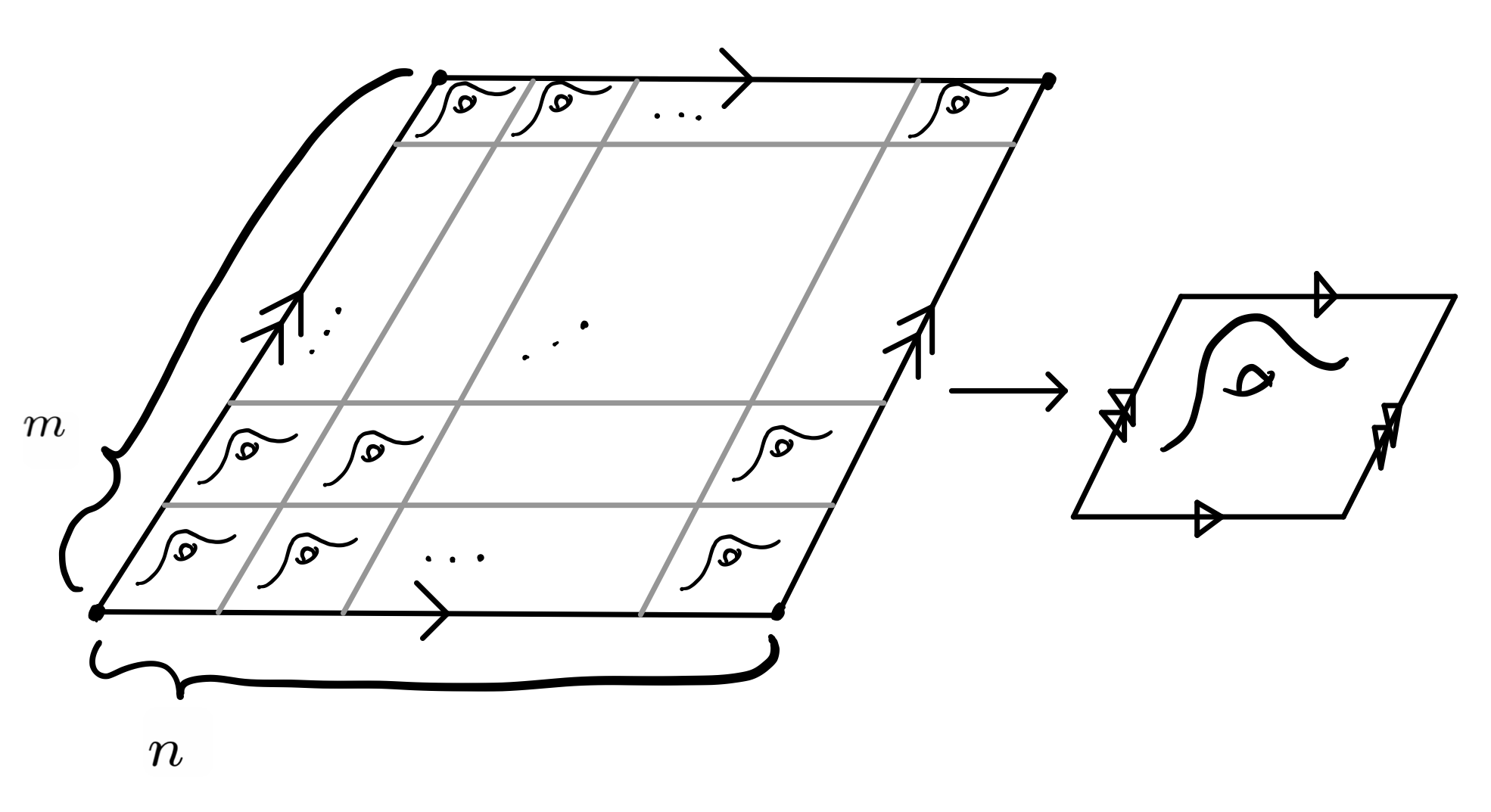}
  \caption{Cover of $\Sigma_2$ for \S \ref{ssec:surfacecaseproof}}
  \label{fig:maincover}
\end{figure}

Let $\phi$ be the commensurator with representative
$f: \Gamma_2 \to \Gamma_2$ given by $f(\gamma) = A \gamma A^{-1}$.
To define the commensurator $\psi$ we need some additional setup:

Let $p_1: S_1 \to \Sigma_2$ and $p_2:S_2 \to \Sigma_2$ be the covers corresponding to $\Delta_1$ and $\Delta_2$ respectively. Then $A^m$ and $A^n$ lift to  non-separating simple-closed curves in $S_1$ and $S_2$ by construction.
Any oriented surface group is uniquely determined by its Euler characteristic. Both $S_1$ and $S_2$ cover $\Sigma_2$ with degree $mn$ and hence have Euler characteristic $-2mn$ by the Riemann--Hurwitz formula. It follows that $S_1$ and $S_2$ are homeomorphic.
Moreover, by the \emph{Change of Coordinates Principle} \cite[p. 37]{MR2850125} there is\footnote{See Appendix  \ref{app:surfacegroup} for an explicit construction of such a map in the case $(m,n) = (2,3)$.} a homeomorphism $S_1 \to S_2$ inducing an isomorphism $g: \Delta_1 \to \Delta_2$, where $g(A^m) = A^n$. Let $\psi$ be the commensurator with representative $g$.
Then the commensurator $\psi \circ \phi^n \circ \psi^{-1}$ has a representative $h = g \circ f^n \circ g^{-1}$ such that for every $\gamma \in \Delta_2$,
$$
h(\gamma) = g \circ f^m \circ g^{-1} (\gamma)
= g( A^{m} g^{-1}(\gamma) A^{-m})
= A^n (g \circ g^{-1}(\gamma)) A^{-n}
= A^n \gamma A^{-n},
$$
and hence $\psi \circ \phi^m \circ \psi^{-1} = \phi^n$.
Thus, the assignment $a \mapsto \psi$ and $b \mapsto \phi$ defines a homomorphism $\Phi: \BS(m,n) \to \Comm(\Gamma_2)$.

Since $\Gamma_2$ has the unique root property, the argument for showing that the map $\BS(m,n) \to \left< \psi, \phi \right>$ is an isomorphism when $m=1$ in \S \ref{ssec:proofoffreecase} applies here verbatim.
Hence, it only remains to show that when $m = 2$ and $n = 3$ the element
$$
\gamma := b^{-1} a b a^{-1} b^{-1} a b a^{-1} b^{-1}
$$
has nontrivial image, since this is in the residual finiteness kernel of $\BS(m,n)$.
The rest of the proof, as before, is computer-assisted. The computer computations are more difficult to implement here as surface groups do not have as much flexibility as free groups. See Appendix \ref{app:surfacegroup} and explanations there to end the proof. \qed

\section{Proof of Corollary \ref{cor:answer}}
\label{sec:proofanswer}

By Theorem \ref{thm:main} it suffices to show that the abstract commensurator of any irreducible lattice $\Lambda$ in a semisimple Lie group $G$ without compact factors and not locally isomorphic to $\PSL_2(\R)$ is locally residually finite.
Indeed, for any such $\Lambda \leq G$, strong rigidity results of Mostow--Prasad--Margulis \cite{MR1090825} show that every commensurator of $\Lambda$ extends to an automorphism of $G$. By the Borel density theorem \cite{MR0123639}, the induced map $\Comm(\Lambda) \to \Aut(G)$ is faithful. This shows $\Comm(\Lambda)$ is linear and thus locally residually finite by Mal\'cev's Theorem \cite{MR0349383}.  \qed

\section{Proof of Theorem \ref{thm:BSnotRF}}
\label{sec:BSnotRF}

In \S \ref{ssec:proofoffreecase} there is a concrete and elementary proof that the element
$$
\gamma := b^{-1} a b a^{-1} b^{-1} a b a^{-1} b^{-1}
$$
is not the identity in $BS(2,3)$. We emphasize that this proof does not require a computer (although it is comforting that the computation can be checked using one).
The salient property of this element is that it is in the kernel of the surjective map $\rho: \BS(2,3) \to \BS(2,3)$ given by $a \mapsto a$ and $b \mapsto b^2$. It is a standard argument that this shows $\BS(2,3)$ is not residually finite. We include it here for completeness:

If $\BS(2,3)$ were residually finite, then there would exist a finite group $Q$ and a homomorphism $f : \BS(2,3) \to Q$ such that $f(\gamma) \neq 1$.
For each $k$ there is an element $\gamma_k$ such that $\rho^k(\gamma_k) = \gamma$. Then $\gamma_k$ is not in the kernel of $f\circ \rho^k$, but lies in the kernel of $f\circ \rho^n$ for all $n\geq k$, and hence the maps $f \circ \rho^k : \BS(2,3) \to Q$ are all distinct for positive $k$. This is impossible since there are only finitely many maps from a finitely-generated group to a fixed finite group. \qed

\section{Proof of Proposition \ref{prop:twogroups}}
\label{sec:prooftwogroups}

We will show that the group $\Comm(F_2)$ contains an infinite collection of groups not in $\Comm(\Gamma_2)$. To start, we record that $\Comm(F_2)$ contains every finite group. Every finitely-generated free group $F_n$ for $n\geq 2$ is a finite-index subgroup of $F_2$, and because $F_2$ has the unique root property, the natural maps $\Aut(F_n) \to \Comm(F_2)$ are injective. Every finite group lies in some symmetric group, hence in $\Aut(F_n)$ for some $n$.

However, $\Comm(\Gamma_2)$ only contains finite groups that have a cyclic subgroup of index at most 2. 
Indeed, the Gromov boundary of $\Gamma_2$ is $S^1$, and so $\Comm(\Gamma_2)$ acts faithfully on $S^1$ as described in \S\ref{ssec:somehistory}. (This action was originally observed by Odden \cite[Proposition 4.5]{MR2115078}.)
Since any square and any commutator in $\Homeo(S^1)$ is orientation preserving, we see that $[\Homeo(S^1):\Homeo_+(S^1)] = 2$. Moreover,  all finite subgroups of $\Homeo_+(S^1)$ are cyclic \cite[Lemma 3.1]{MR3813208}, it follows that every finite subgroup of $\Comm(\Gamma_2)$ contains a cyclic subgroup of index at most 2.  \qed

\section{A parting proof with questions}
\label{sec:partingquestion}
\subsection{}
If a finitely-generated group $\Gamma$ has solvable word problem, then finitely-generated subgroups of $\Aut(\Gamma)$ have solvable word problem. 
The same is true of $\Comm(\Gamma)$ when $\Gamma$ is finitely presented and satisfies the unique root property. 
In particular, finitely-generated subgroups of abstract commensurators of surface groups have solvable word problem. 

\begin{proposition} \label{prop:compute}
Let $\Gamma$ be a finitely presented group with solvable word problem and the unique root property. 
Then any finitely generated subgroup of $\Comm(\Gamma)$ has solvable word problem.
\end{proposition}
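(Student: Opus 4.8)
The plan is to represent elements of a finitely-generated subgroup $H \leq \Comm(\Gamma)$ by concrete, computable data, and then show that both the group operations and the test for triviality can be carried out algorithmically. Each element of $\Comm(\Gamma)$ is an equivalence class of isomorphisms between finite-index subgroups of $\Gamma$. Since $\Gamma$ is finitely presented, a finite-index subgroup $K \leq \Gamma$ can be specified by a finite generating set of words in the generators of $\Gamma$ (for instance, via a coset table obtained from a finite-index description), and an isomorphism $f \colon K \to L$ can be specified by recording the images $f(k_i)$ as words in the generators of $\Gamma$. Thus a commensurator is encoded by a finite packet of words. First I would fix such a finite collection of representatives $f_1, \dots, f_r$ for a generating set of $H$, together with representatives for their inverses.

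Next I would address the group operations. Given two commensurators represented by isomorphisms $f \colon A \to B$ and $g \colon C \to D$, their composite $g \circ f$ is defined on the finite-index subgroup $f^{-1}(B \cap C)$. Because $\Gamma$ is finitely presented with solvable word problem, one can algorithmically compute a finite generating set for the intersection $B \cap C$ of two finite-index subgroups (these are themselves finite-index, hence finitely generated), pull back through $f$, and write down the composite isomorphism as words in the generators of $\Gamma$. So an arbitrary word $w$ in the chosen generators of $H$ can be evaluated to produce an explicit representative isomorphism $f_w \colon A_w \to B_w$ given by words. This reduces the word problem for $H$ to deciding, for such an explicitly given $f_w$, whether it represents the identity of $\Comm(\Gamma)$.

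The key reduction is the criterion already recorded in \S \ref{sec:theproofs}: by the unique root property, an isomorphism $f \colon A \to B$ between finite-index subgroups represents the trivial element of $\Comm(\Gamma)$ if and only if $f$ agrees with the identity on $A$, equivalently $f(a) = a$ for every element $a$ of a generating set of $A$. Since $f_w$ is given by finitely many words $f_w(a_i)$ and each $a_i$ is itself a word in the generators of $\Gamma$, checking $f_w(a_i) = a_i$ is exactly a finite sequence of instances of the word problem in $\Gamma$, which is solvable by hypothesis. Hence the word problem for $H$ is solvable.

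The main obstacle I expect is the effectivity of the intersection and pullback step: one must be certain that, from the word-based data for two finite-index subgroups, a finite generating set of their intersection and the images of those generators under a given isomorphism can all be computed effectively, not merely that they exist. This rests on the fact that finite presentability of $\Gamma$ together with a solution to its word problem lets one compute coset tables for finite-index subgroups and thereby carry out the Reidemeister--Schreier procedure; verifying that these classical algorithms terminate and interface correctly with the word-problem oracle is the crux. Once that bookkeeping is in place, the remaining steps are the routine reductions described above, and the specialization to surface groups follows since they are finitely presented, have solvable word problem, and satisfy the unique root property.
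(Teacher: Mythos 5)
Your proposal is correct and follows essentially the same route as the paper's proof: encode each commensurator by a representative isomorphism given on an explicit finite generating set of its finite-index domain, compose by algorithmically computing generators of intersections of finite-index subgroups (the paper does this via a finite quotient of $\Gamma$ whose kernel lies in the intersection, you via coset enumeration and Reidemeister--Schreier, which is the same effectivity argument), and reduce triviality in $\Comm(\Gamma)$ to checking that generators are fixed, using the unique root property and the solvable word problem of $\Gamma$. No gaps; the ``main obstacle'' you flag is exactly the point the paper handles with finite presentability.
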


\begin{proof}
Fix a finite set $S$ of elements in $\Comm(\Gamma)$.
We outline an algorithm that inputs a word in $S$ and outputs ``True'' if the word is the identity, and ``False'' otherwise.
We take as constants in our algorithm representatives for each element in $S$, along with indices and finite generating sets for each of the domains and codomains of elements in $S$ (such generating sets exist by the Reidemeister-Schreier Method).
Given the above constants, the inverse of each element in $S$ can be computed, so without loss of generality we assume that $S$ is symmetric.

We start with the simplest case, when the word is of length one. Let $g : A \to B$ be the representative of the commensurator of $\Gamma$ corresponding to the letter, where $A, B$ are of finite index inside $\Gamma$ and have explicitly written generating sets. 
The isomorphism $g$ is fully determined by its values on the fixed finite generating set for $A$. Since the word problem in $\Gamma$ is solvable, determining whether $g$ fixes each generator of $A$ is solvable, and hence determining whether $g$ is the identity is solvable. It follows, by our assumptions on $\Gamma$, that determining whether the commensurator induced by $g$ is the identity is solvable.

For a word of length two we have two maps $g : A \to B$ and $h : C \to D$, where $A, B, C,D$ are all of finite index inside $\Gamma$ and have explicitly written generating sets and indices. Define
$$
g \star h = g |_{A \cap D} \circ h |_{h^{-1}(A \cap D)} 
$$
which is a map $h^{-1}(A \cap D) \to g(A \cap D)$ representing the composition of commensurators $[g]\circ [h] \in \Comm(\Gamma)$. Because $\Gamma$ has the unique root property, $[g]\circ [h]$ is trivial in $\Comm(\Gamma)$ if and only if $g\star h$ is the identity function.
Since $\Gamma$ is finitely presented, there is an algorithm for finding a finite quotient of $\Gamma$ whose kernel is contained in $A \cap D$.
Thus, there is an algorithm for computing a finite generating set for $A \cap D$, and subsequently $h^{-1}(A \cap D)$. As the homomorphism $g \star h$ is determined by its values on a generating set of $h^{-1}(A \cap D)$, determining whether $g \star h$ is the identity is solvable because $\Gamma$ has solvable word problem.

Repeating this process inductively for arbitrary compositions $h_1 \star h_2 \star h_3 \star \cdots \star h_k$ for letters $h_i$, allows one to handle word of arbitrarily length, giving the desired algorithm.
\end{proof}

\subsection{} The above proposition motivates the following questions:
\begin{enumerate}
    \item \label{ques:1} Which groups $\Comm(\Gamma)$ from Proposition \ref{prop:compute} contain a copy of a non-residually finite Baumslag--Solitar group?
    \item \label{ques:2} Does there exist a torsion-free group $G$ that embeds inside $\Comm(F_2)$ but not inside $\Comm(\Gamma_2)$? Vice versa?
%    \item Let $\Gamma$ be a finite covolume Fuchsian group. What are the precise bounds for how fast the word problem can be solved for any finitely generated subgroup of $\Comm(\Gamma)$?
%    \item Is the membership problem solvable for finitely generated subgroups of $\Comm(\Gamma)$?
    \item \label{ques:3} Let $\Gamma$ be a surface group. Do finitely generated subgroups of $\Comm(\Gamma)$ satisfy a Tits' alternative?%Does the first Grigorchuk group embed inside $\Comm(\Gamma)$?
%    \item Does there exist gaps for word growths of finitely generated subgroups of $\Comm(\Gamma)$?
%% Too vague?    \item It seems $\Aut(\Gamma)$ should be the analog of a lattice inside $\Comm(\Gamma)$. Is there a natural topology on $\Comm(\Gamma)$ for which $\Aut(\Gamma)$ is discrete? Classify the subgroups of $\Comm(\Gamma)$ that are commensurable with $\Aut(\Gamma)$, do these come from the relative commensurator?
\end{enumerate}
 Note that a partial answer to Question \ref{ques:3} is given in the compact surface group case by \cite{MR1797749}, using the fact that $\Comm(\Gamma)$ embeds in $\Homeo(S^1)$ \cite{MR2115078}.
% This paper shows that $BS(1,n)$ for $n >0$ inject into the $\Comm(\Gamma)$ for finite-covolume Fuchsian groups $\Gamma$. It would be nice to form a dictionary of when groups appear as subgroups of $\Comm(F_2)$ or $\Comm(\pi_1(\Sigma_2,*))$. To start, which Baumslag-Solitar groups have isomorphic images in $\Comm(\Gamma)$ for any finite co?

%------------------------------------------------------------------------------
%--------- BIBLIOGRAPHY:
%------------------------------------------------------------------------------
\appendix
\section{The GAP System}
In the appendices that follow we use the GAP System to complete our proofs.
Borrowing words from the creators \cite{GAP4}:
``GAP is a system for computational discrete algebra, with particular emphasis on Computational Group Theory. GAP provides a programming language, a library of thousands of functions implementing algebraic algorithms written in the GAP language as well as large data libraries of algebraic objects.'' GAP is especially well-suited for our needs, with key functions that we will use to explicitly evaluate commensurators.
Here is a short glossary of some of the key functions used in our code:

\vskip.1in
\paragraph{{\bf GroupHomomorphismByImages}( domain, codomain, list1, list2 )} Inputs two groups ``domain'' and ``codomain'' with lists ``list1'' and ``list2''. 
When GAP runs this command it first verifies that a well-defined homomorphism from ``domain'' to ``codomain'' sending ``list1'' to ``list2'' exists, returning ``fail'' otherwise.
If the homomorphism requested is well-defined, this returns a homomorphism from domain to codomain where elements of ``list1'' consisting of generators of domain are sent to corresponding elements in ``list2''.

\vskip.1in 
\paragraph{{\bf Image}(map, elem)} Inputs a homomorphism ``map'' and an element from the domain ``elem''. Returns the image of element ``elem'' under an application of the homomorphism ``map''.
\vskip.1in
\paragraph{{\bf InverseGeneralMapping}(map)} Inputs a homomomorphism ``map''. This function returns an inverse of an isomorphism of groups where the domain and codomain are not equal.
\vskip.1in
\paragraph{{\bf IsomorphismSimplifiedFpGroup}(G)} Inputs a finitely presented group ``G'', for which GAP applies Tietze transformations to a copy in order to reduce it with respect to the number of generators, the number of relators, and the relator lengths. When we apply this to a finite-index subgroup of an oriented cocompact surface group we get a one-relator group, as expected.
This function returns an isomorphism with domain G, codomain a group H isomorphic to G, so that the presentation of H has been simplified using Tietze transformations.
\vskip.1in
\paragraph{{\bf IsOne}(elem)} Inputs an element ``elem'' from a group. Returns true if ``elem'' is equal to the identity, and false if ``elem'' is not. This function is not guaranteed to terminate.

\section{Computer-assistance for the free group case}

\label{app:freegroup}

The code in \S\ref{ssec:codefree}  concretely defines maps $\phi$ and $\psi$ from the proof of the free group case.
Here K1 and K2 are the subgroups $\Delta_1$ and $\Delta_2$, and K1.1 corresponds to the element $A^{-2}$ in the free group $\left< A, B \right>$.

Running this code outputs ``false''. The GAP code verifies that each isomorphism exists and that the domains and ranges are all appropriate. In the end the GAP code computes the function
$$
w = \phi^{-1} \psi \phi \psi^{-1} \phi^{-1} \psi \phi \psi^{-1} \phi^{-1}
$$
with input Word = $B A B^{-1} A^{-1}$. It outputs Word10 = $A^{3} B A B^{-1} A^{2}$, and verifies that these are not equal (although this last part is easily done by hand).

After running the code below one can investigate properties of the map $\psi$. For instance, here $\psi$ is evaluated on $A^{-2}$:

\begin{verbatim}
gap> K1.1;
A^-2
gap> Image(psi, K1.1);
A^-3
\end{verbatim}
\noindent
The variable K1.1, corresponding to the element $A^{-2}$ in $F_2$, is shown here as being mapped to $A^{-3}$.

Moreover, one can verify that psi is an isomorphism K1 $\to$ K2 in GAP by checking that the map is surjective and that K1 and K2 have the same index in f:
\begin{verbatim}
gap> Image(psi, K1) = K2;
true
gap> Index(f, K1) = Index(f, K2);
true
\end{verbatim}

\subsection{The code} \label{ssec:codefree}
We ran the following code on GAP version 4.8.8. The usefulness of the code below rests on the identifications of the generator K1.1 with $A^{-2}$ and the generator K2.2 with $A^{3}$. Be aware that other versions of GAP may have different implementations of the function Kernel, in which $A^2$ and $A^3$ may correspond to different generators, or may not even be part of the chosen free generating set.

\begin{lstlisting}[language=GAP]
# Define the groups
f := FreeGroup("A", "B" );;
A := DirectProduct(CyclicGroup(2), CyclicGroup(3));;

# Define conjugation map, phi:
phi := GroupHomomorphismByImages ( f, f, [f.1, f.2], [f.1, f.1*f.2*f.1^(-1)]);;
phi2 := Inverse(phi);;

# Define the projection maps pi1 and pi2:
pi1 := GroupHomomorphismByImages( f, A, [f.1, f.2], [A.1, A.2]);;
pi2 := GroupHomomorphismByImages( f, A, [f.1, f.2], [A.2, A.1]);;

# Running Rank ensures K1 and K2 are equipped with finite presentations
K1:= Kernel(pi1);; Rank(K1);;
K2:= Kernel(pi2);; Rank(K2);;

# Define the map psi, there is a great deal of flexibility here, except that
# we want K1.1 (A^(-2)) to map to K2.2^(-1) (A^(-3)).
psi := GroupHomomorphismByImages ( K1, K2, 
    [K1.1, K1.2, K1.3, K1.4, K1.5, K1.6, K1.7],
    [K2.2^(-1), K2.5, K2.3, K2.4, K2.6, K2.1, K2.7]);;
psi2 := InverseGeneralMapping(psi);;

# Evaluate the word w in the residual finiteness kernel of BS(2,3):
Word := K1.2;; Word2 := Image(phi2, Word);;
Word3 := Image(psi2, Word2);; Word4 := Image(phi, Word3);;
Word5 := Image(psi, Word4);; Word6 := Image(phi2, Word5);;
Word7 := Image(psi2, Word6);; Word8 := Image(phi, Word7);;
Word9 := Image(psi, Word8);; Word10 := Image(phi2, Word9);;

#  Check to see if Word10 == Word. Outputs false, as desired:
IsOne(Word10*Word^(-1));
\end{lstlisting}

\section{Computer-assistance for the cocompact case}

\label{app:surfacegroup}

The code in \S\ref{ssec:cocompactcode} concretely defines maps $\phi$ and $\psi$ from the proof of the cocompact Fuchsian case.
Here K1 and K2 are the subgroups $\Delta_1$ and $\Delta_2$, and K1.1 corresponds to the element $A^{-2}$ in the surface group $g = \left< A, B, C, D : [A, B][C,D] \right>$.

In this case, more care must be taken in defining the map $\text{psi}: \text{K1} \to \text{K2}$ because K1 and K2 are not free groups. Explicitly defining maps from K1 to K2 using the GAP function GroupHomomorphismByImages usually results in maps that GAP cannot verify are well-defined isomorphisms (moreover, finding such maps from scratch is prohibitively difficult). To get around this obstruction, we first simplify the presentations of K1 and K2 before finding an isomorphism (akin to diagonalizing a matrix before doing computations).

We briefly explain the construction of psi in the code now. First, the code defines group homomorphisms Iso1 : K1 $\to$ fp1, Hom1 : fp1 
$\to$ Image1, Iso2 : K1 $\to$ fp2, and Hom2 : fp2 $\to$ Image2. GAP does not view K1 and K2 as finitely presented groups, and so the maps Iso1 and Iso2 are simply isomorphisms to their finite presentations (computed from the fact that K1 and K2 are finite-index normal subgroups). The maps Hom1 and Hom2 are maps to simplified finite presentations. The resulting maps from this construction are version dependent, so running our code in a different version of GAP may result in lastMap not being well-defined (in which case GAP will throw a fault). The images of the maps Hom1 and Hom2 have the following presentations. The red color indicates a part of the relation that is significantly different from the rest.

\begin{eqnarray*}
\text{Image1} &=& %< F_1, F_2, F_3, F_4, F_5, F_6, F_7, F_8, F_{9}, F_{10}, F_{11}, F_{12}, F_{13}, F_{14}  : \\
%&&F_{11}F_{12}F_{11}^{-1}F_{12}^{-1}F_4F_5F_4^{-1}F_5^{-1}F_1^{-1}F_6F_1F_{13}F_{14} 
%F_{13}^{-1}F_{14}^{-1}F_9 \cdot \\
%&&F_{10} F_9^{-1}F_{10}^{-1}F_6^{-1}F_7F_8F_7^{-1}F_8^{-1}F_2F_3F_2^{-1}F_3^{-1} = 1 >\\
%&=&
\langle F_1, F_2, F_3, F_4, F_5, F_6, F_7, F_8, F_{9}, F_{10}, F_{11}, F_{12}, F_{13}, F_{14}  : \\
&& [F_{11}, F_{12}] [F_4, F_5]{\color{red}F_1^{-1}F_6F_1} [F_{13},F_{14}] [F_9, F_{10}] {\color{red}F_6^{-1}} [F_7, F_8] [F_2, F_3] = 1 \rangle, \\
\text{Image2} &=& %< F_1, F_2, F_3, F_4, F_5, F_6, F_7, F_8, F_{9}, F_{10}, F_{11}, F_{12}, F_{13}, F_{14}   :  \\
%&&F_{5} F_{6} F_{5}^{-1} F_{6}^{-1} F_{4} F_{13} F_{14} F_{13}^{-1} F_{14}^{-1} F_{7} F_{8} F_{7}^{-1}\cdot \\ &&F_{8}^{-1} F_{1}^{-1} F_{4}^{-1} F_{1} F_{9} F_{10} F_{9}^{-1} F_{10}^{-1} F_{2} F_{3} F_{2}^{-1} F_{3}^{-1} F_{11} F_{12} F_{11}^{-1} F_{12}^{-1} =1>\\
\langle F_1, F_2, F_3, F_4, F_5, F_6, F_7, F_8, F_{9}, F_{10}, F_{11}, F_{12}, F_{13}, F_{14}  : \\
&&[F_5, F_6] {\color{red}F_{4}} [F_{13}, F_{14}] [F_{7}, F_{8}] {\color{red} F_{1}^{-1} F_{4}^{-1} F_{1}} [F_{9}, F_{10}] [F_{2}, F_{3}][F_{11}, F_{12}]=1\rangle.
\end{eqnarray*}

The image of $A^{-2}$ under $\text{Hom1} \circ \text{Iso1}$ is $F_1$ in Image1. Moreover, $F_4$ maps to $A^3$ under $(\text{Hom2} \circ \text{Iso2})^{-1}$. Then lastMap : Image1 $\to$ Image2 is defined to be an isomorphism taking $F_1$ to $F_4$. The resulting map $\text{psi}: \text{K1} \to \text{K2}$ is then defined by the composition
$$
\text{psi} := \text{flipHom} \circ \text{Iso2}^{-1} \circ \text{Hom2}^{-1} \circ \text{lastMap} \circ \text{Hom1} \circ \text{Iso1},
$$
where flipHom sends $A$ to $A^{-1}$, correcting that lastMap sends $A^{-2}$ to $A^3$.

After verifying that the map psi is well-defined, the computer uses the finite presentation to verify that the input and output of the function
$$
w = \phi^{-1} \psi \phi \psi^{-1} \phi^{-1} \psi \phi \psi^{-1} \phi^{-1}
$$
\noindent
are not identical when evaluated at K1.2 = C in $g$ (one can also do this last check by hand using Dehn's algorithm).
The output to this evaluation, Word10, is:
\begin{verbatim}
A^-1*B^2*A^-1*B^-2*(A^-1*C*D*C^-1*D^-1*A*B)^2*A*B^-2*A^3*B*A^-1
*D*C*D^-1*C^-1*A*B^-1*A^-1*D*C*D^-1*C^-1*A^-2*B^2*A*B^-1*A^-1*D
*C*D^-1*C*D*C^-1*D^-1*A*B*A^-1*B^-2*A^2*C*D*C^-1*D^-1*A*B*A^-1
*C*D*C^-1*D^-1*A*B^-1*A^-3*B^2*A^-1*(B^-1*A^-1*D*C*D^-1*C^-1*A)^2
*B^2*A*B^-2*A
\end{verbatim}

As before, K1.1 corresponds to $A^{-2}$ and applying $\psi$ to it yields $A^{-3}$, as desired:

\begin{verbatim}
gap> K1.1; 
A^-2
gap> Image(psi, K1.1);
A^-3
\end{verbatim}
Moreover, one can verify that psi is an isomorphism K1 $\to$ K2 in GAP by checking that the map is surjective and that K1 and K2 have the same index in g:
\begin{verbatim}
gap> Image(psi, K1) = K2;
true
gap> Index(g, K1) = Index(g, K2);
true
\end{verbatim}

\subsection{The code} \label{ssec:cocompactcode}
We ran the following code on GAP version 4.8.8.
Note again that different versions of GAP may have different implementations of key functions, such as IsomorphismFpGroup and IsomorphismSimplifiedFpGroup, which are used below to find explicit presentations of $\Delta_1$ and $\Delta_2$. 
Because the definition of $\psi$ uses the presentations output by these functions, the definition below may not determine an isomorphism in other versions of GAP.

\begin{lstlisting}[language=GAP]
# Define the groups:
f := FreeGroup("A", "B", "C", "D" );;
comm := function (a,b) return a*b*a^(-1)*b^(-1);; end;;
g := f / [ comm(f.1,f.2)*comm(f.3,f.4) ];;
A := DirectProduct(CyclicGroup(2), CyclicGroup(3));;

# Setup for defining the map psi.
# Note that g.1, g.2, g.3, g.4 correspond to the images of A, B, C, D in f.
Pi1 := GroupHomomorphismByImages ( g, A, [g.1,g.2,g.3,g.4], 
    [A.1, A.2, A.1^0, A.1^0]);;
Pi2 := GroupHomomorphismByImages ( g, A, [g.1,g.2,g.3,g.4], 
    [A.2, A.1, A.1^0, A.1^0]);;

K1:= Kernel(Pi1);;
K2:= Kernel(Pi2);;

# Simplify the presentations of K1 and K2, while keeping track of
# the maps from K1 and K2 to their reduced presentations
Iso1 := IsomorphismFpGroup( K1 );;
fp1 := Image(Iso1);;
Hom1 := IsomorphismSimplifiedFpGroup(fp1);;
Image1 := Image(Hom1);;

Iso2 := IsomorphismFpGroup( K2 );;
fp2 := Image(Iso2);;
Hom2 := IsomorphismSimplifiedFpGroup(fp2);;
Image2 := Image(Hom2);;

# To define the map psi from K1 to K2, first define a map, lastMap,
# between the reduced presentations Image1 and Image2 of K1 and K2, 
# respectively.

# Conjugation map for defining lastMap
conj := ConjugatorAutomorphism(Image1, Image1.1^(-1)*Image1.6);;
apc := function (a) return Image(conj, a);; end;;

# A map to swap g.1 with g.1^{-1} so we get an image of BS(2,3) 
# and not BS(-2,3)
flipHom := GroupHomomorphismByImages( g, g, [g.1, g.2, g.3, g.4], 
    [g.1^(-1), g.2^(-1), g.2^(-1)*g.1^(-1)*g.3*g.1*g.2,
    g.2^(-1)*g.1^(-1)*g.4*g.1*g.2]);;

# This defines an isomorphism between reduced presentations of 
# K1 and K2.
lastMap := GroupHomomorphismByImages ( Image1, Image2,
    [Image1.1,apc(Image1.2),apc(Image1.3),apc(Image1.4),apc(Image1.5),
    Image1.6,apc(Image1.7),apc(Image1.8),Image1.9,Image1.10,
    apc(Image1.11), apc(Image1.12),Image1.13,Image1.14], 
    [Image2.4,Image2.2,Image2.3,Image2.5,Image2.6,Image2.1,
    Image2.9,Image2.10,Image2.7,Image2.8,Image2.11,Image2.12,
    Image2.13,Image2.14]);;

# Define the map psi:
psi := CompositionMapping(flipHom, InverseGeneralMapping(Iso2), 
    InverseGeneralMapping(Hom2), lastMap, Hom1, Iso1);;
psi2 := InverseGeneralMapping(psi);;

# Define conjugation map:
phi := GroupHomomorphismByImages ( g, g, [g.1, g.2, g.3, g.4], 
    [g.1, g.1*g.2*g.1^(-1), g.1*g.3*g.1^(-1), g.1*g.4*g.1^(-1)]);;
phi2 := Inverse(phi);;

# Evaluate the word w in the residual finiteness kernel of BS(2,3):
Word := K1.2;; Word2 := Image(phi2, Word);;
Word3 := Image(psi2, Word2);; Word4 := Image(phi, Word3);;
Word5 := Image(psi, Word4);; Word6 := Image(phi2, Word5);;
Word7 := Image(psi2, Word6);; Word8 := Image(phi, Word7);;
Word9 := Image(psi, Word8);; Word10 := Image(phi2, Word9);;

# Check to see if Word10 == Word. Outputs false, as desired:
IsOne(Word10*Word^(-1));
\end{lstlisting}

\bibliography{refs}
\bibliographystyle{amsalpha}

\end{document}